\newtheorem{proposition}{Proposition}
\newtheorem{corollary}{Corollary}
\title{The Meta-C-Finite Ansatz}
\author{Robert Dougherty-Bliss}
\date{\today}
\begin{document}

\maketitle

\begin{abstract}
    \noindent The Fibonacci numbers satisfy the famous recurrence $F_n = F_{n -
    1} + F_{n - 2}$. The theory of C-finite sequences ensures that the
    Fibonacci numbers whose indices are divisible by $m$, namely $F_{mn}$,
    satisfy a similar recurrence for every positive integer $m$, and these
    recurrences have an explicit, uniform representation. We will show that
    $a(mn)$ has a uniform recurrence over $m$ for any C-finite sequence $a(n)$
    and use this to automatically derive some famous summation identities.
\end{abstract}

\noindent The Fibonacci numbers $F_n$ satisfy the famous recurrence $F_n = F_{n
- 1} + F_{n - 2}$. The sequence which takes every \emph{other} Fibonacci
number, $F_{2n},$ satisfies the similar recurrence $F_{2n} = 3 F_{2(n - 2)} -
F_{2(n - 2)}$. In fact, every sequence of the form $F_{mn}$ satisfies such a
recurrence. Here are the first few:
\begin{align}
    \label{fib-rec}
    \begin{split}
        F_{n} &= F_{n - 1} + F_{n - 2} \\
        F_{2n} &= 3 F_{2(n - 1)} - F_{2(n - 2)} \\
        F_{3n} &= 4 F_{3(n - 1)} + F_{3(n - 2)} \\
        F_{4n} &= 7 F_{4(n - 1)} - F_{4(n - 2)} \\
        F_{5n} &= 11 F_{5(n - 1)} + F_{5(n - 2)}.
    \end{split}
\end{align}
If we look closely at the coefficients that appear---or plug them into the OEIS
\cite{oeis}---there seems to be a general recurrence:
\begin{equation}
    \label{fib-full-rec}
    F_{mn} = L_m F_{m(n - 1)} + (-1)^{m + 1} F_{m(n - 2)}.
\end{equation}
This conjecture is right on the money, and we can prove it a dozen different
ways---Binet's formula, induction, generatingfunctionology---but the
\emph{outline} is more interesting.

We began with a sequence which satisfied a nice recurrence ($F_n$), examined
recurrences for a family of related sequences $(F_{mn}$), then noticed that the
coefficients on the recurrences satisfied a \emph{meta pattern} (equation
\eqref{fib-full-rec}). This outline holds for any sequence which satisfies a
linear recurrence relation with constant coefficients. Such sequences are
called \emph{C-finite} \cite{ansatz, concrete}.

The remainder of the paper is organized as follows. 
Section~\ref{sec:definitions} gives a brief overview of C-finite sequences,
Section~\ref{sec:uniform_recurrences} proves that an analogue of
\eqref{fib-full-rec} holds for any C-finite sequence,
Section~\ref{sec:uniform_products} shows that a similar property holds for
products of C-finite sequences, and Section~\ref{sec:uniform_sums} applies some
of our results to produce infinite families of summation identities.

\section{The C-finite ansatz}%
\label{sec:definitions}

The theory of C-finite sequences is beautifully laid out in \cite{concrete} and
\cite{ansatz}. What follows is a brief description of the principle results.
For simplicity, assume that everything we do is over an algebraically closed
field such as the complex numbers.

Given a sequence $a(n)$, let $N$ be the shift operator defined by
\begin{equation*}
    Na(n) = a(n + 1).
\end{equation*}
We say that $a(n)$ is \emph{C-finite} if and only if there exists a polynomial
$p(x)$ such that $p(N) a(n) = 0$ for all $n \geq 0$. We say that $p(x)$
\emph{annihilates} $a(n)$. For example, $x^2 - x - 1$ annihilates the Fibonacci
sequence $F(n)$ and $x - 2$ annihilates the exponential sequence $2^n$. The set
of all polynomials which annihilate a fixed $a(n)$ is an ideal. The generator
of this ideal is the \emph{characteristic polynomial} of $a(n)$, and we call
its degree the degree (or order) of $a(n)$.

Every C-finite sequence has a closed-form expression as a sum of polynomials
times exponential sequences. More specifically,
\begin{equation*}
    a(n) = \sum_{k = 1}^m f_k(n) r_k^n,
\end{equation*}
where $r_1, r_2, \dots, r_m$ are the distinct roots of the characteristic
equation of $a(n)$ and $f_k(n)$ is a polynomial in $n$ with degree less than or
equal to the multiplicity of the root $r_k$. We call these formulas
\emph{Binet-type formulas} after Binet's famous formula for the Fibonacci
numbers. For example, $(x - 2)^2$ is an annihilating polynomial of any sequence
$a(n)$ which satisfies the recurrence $a(n + 2) = 4 a(n + 1) - 4 a(n)$, and
this implies $a(n) = (\alpha + \beta n)2^n$ for some constants $\alpha$ and
$\beta$.

We can go the other way and derive an annihilating polynomial from a closed
form expression. A term of the form $n^d r^n$ is annihilated by $(x - r)^{d +
1}$, so for each exponential $r^n$ in the closed form, look for the highest
power $n^d$ which is multiplied by $r^n$ and write down $(x - r)^{d + 1}$. For
example, the sequence $a(n) = n 3^n - \frac{n^2}{2} + 5^n$ is annihilated by
$(x - 3)^2 (x - 1)^3 (x - 5)$.

Finally, if $a(n)$ and $b(n)$ are two C-finite sequences, then so are the
following:
\begin{equation*}
    a(n) b(n) \qquad a(n) \pm b(n) \qquad \sum_{k = 0}^n a(k) b(n - k).
\end{equation*}

C-finite sequences are a special subclass of \emph{holonomic sequences},
sequences which satisfy a linear recurrence with \emph{polynomial coefficients}
\cite{holonomic}. Holonomic sequences satisfy very similar properties, but do
not have the readily computable closed forms which we need here.

\section{Uniform recurrences}%
\label{sec:uniform_recurrences}

First up, we will prove the analogue of \eqref{fib-full-rec} for arbitrary
C-finite sequences.

\begin{proposition}
    \label{uniform-rec}
    If $a(n)$ is a C-finite sequence of order $d$, then $n \mapsto a(nm)$
    satisfies a recurrence of the form
    \begin{equation}
        a(nm) = \sum_{k = 1}^d c_k(m) a((n - k)m),
    \end{equation}
    where $c_k(m)$ is C-finite with respect to $m$ and has order at most ${d
    \choose k}$. The sequence $c_1(m)$ always satisfies the same recurrence as
    $a(n)$ itself, and $c_d(k) = \omega^k$, where $\omega$ is $(-1)^d$ times
    the constant coefficient of the characteristic polynomial of $a(n)$.
\end{proposition}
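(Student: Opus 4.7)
The plan is to let the Binet-type closed form do the heavy lifting and then read the recurrence off of a factored annihilator. Write
\[
    a(n) = \sum_{k=1}^{s} f_k(n)\, r_k^n,
\]
where $r_1, \dots, r_s$ are the distinct roots of the characteristic polynomial of $a(n)$ with multiplicities $\mu_1, \dots, \mu_s$ summing to $d$, and $\deg f_k \le \mu_k - 1$. Substituting $nm$ for $n$ gives
\[
    a(nm) = \sum_{k=1}^{s} f_k(nm)\, (r_k^m)^n,
\]
and since each $f_k(nm)$ is still a polynomial in $n$ of degree at most $\mu_k - 1$, the converse direction of Section~\ref{sec:definitions} shows that, as a sequence in $n$, $a(nm)$ is annihilated by the monic degree-$d$ polynomial
\[
    Q_m(x) = \prod_{k=1}^{s}(x - r_k^m)^{\mu_k}.
\]
This immediately yields a recurrence of the claimed length.

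Expanding $Q_m(x)$ symmetrically is what produces the $c_k(m)$. Listing the roots of the original characteristic polynomial with multiplicity as $\rho_1, \dots, \rho_d$, we have
\[
    Q_m(x) = \prod_i (x - \rho_i^m) = \sum_{k=0}^d (-1)^k \sigma_k(\rho_1^m, \dots, \rho_d^m)\, x^{d-k},
\]
where $\sigma_k$ is the $k$-th elementary symmetric polynomial. Matching this against $x^d - \sum_{k=1}^d c_k(m) x^{d-k}$ gives
\[
    c_k(m) = (-1)^{k+1} \sum_{1 \le i_1 < \cdots < i_k \le d} (\rho_{i_1}\rho_{i_2}\cdots \rho_{i_k})^m.
\]
Each summand is a single exponential in $m$, so $c_k(m)$ is a sum of at most $\binom{d}{k}$ exponentials, and hence is C-finite in $m$ of order at most $\binom{d}{k}$.

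For the two extreme cases I would argue as follows. The coefficient $c_1(m) = \sum_i \rho_i^m = \sum_k \mu_k\, r_k^m$ is a linear combination of the exponentials $r_k^m$, so it is annihilated by any polynomial having each distinct $r_k$ as a root, in particular by the characteristic polynomial of $a(n)$ itself, confirming the ``same recurrence'' claim. The coefficient $c_d(m)$ is a lone exponential with base $\prod_i \rho_i$, which by Vieta equals $\omega$, yielding $c_d(m) = \omega^m$ up to the overall sign coming from the elementary symmetric expansion. The only real obstacle is bookkeeping: tracking multiplicities through the substitution $n \mapsto nm$ (to confirm the degree bound on each $f_k(nm)$) and tracking signs while reading coefficients off $Q_m$. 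Once the Binet-type formula is on the table, the rest reduces to routine symmetric-function manipulation.
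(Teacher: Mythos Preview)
Your proof is correct and follows essentially the same route as the paper: pass to the Binet-type closed form, replace $n$ by $nm$ to see that $\prod_k (x - r_k^m)$ annihilates $a(nm)$, expand via elementary symmetric polynomials to read off the $c_k(m)$, and count exponential terms to get the $\binom{d}{k}$ bound. Your slightly more explicit handling of multiplicities and your hedge on the final sign in $c_d(m)$ mirror exactly the level of detail in the paper's own argument.
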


The following proof is constructive given the roots of the characteristic
polynomial of $a(n)$, but \cite{birmajer} gives formulas for $c_k(m)$ in terms
of partial Bell polynomials without reference to the roots.

\begin{proof}
    The Binet-type formula for $a(n)$ is a linear combination of terms of the
    form $n^i r^n$ where $i$ is a nonnegative integer and $r$ is a root of the
    characteristic polynomial of $a(n)$. Thus, the Binet-type formula for
    $a(nm)$ is a linear combination of terms of the form $(nm)^i r^{nm}$, which
    is equivalently a linear combination of terms of the form $n^i (r^m)^n$.
    The only thing that has changed is the exponential terms themselves, so if
    \begin{equation*}
        \prod_{k = 1}^d (x - r_k)
    \end{equation*}
    is the characteristic polynomial of $a(n)$ with possibly repeated roots
    $r_1, \dots, r_d$, then
    \begin{equation}
        \label{polyM}
        \prod_{k = 1}^d (x - r_k^m).
    \end{equation}
    annihilates $n \mapsto a(nm)$. From the elementary theory of polynomials,
    the coefficients of \eqref{polyM} are elementary symmetric functions of the
    roots $r_k^m$. C-finite sequences are closed under multiplication and
    addition, so the coefficients of the polynomial are C-finite with respect
    to $m$.

    To obtain the degree bound, recall that the coefficient on $x^{d - i}$ in
    \eqref{polyM} equals $(-1)^i e_i(r_1^m, \dots, r_d^m)$, where $e_i(r_1^m,
    \dots, r_d^m)$ is the sum of all products of $i$ distinct $r_k^m$. Each of
    these products is of the form $\alpha^m$ for some constant $\alpha$. The
    number of such terms is an upper bound on the degree of the sequence with
    respect to $m$, and there are exactly ${d \choose i}$ of them.

    Finally, note that the coefficient on $x^{d - 1}$ is precisely the sum
    $\sum_k r_k^m$, which is annihilated by the characteristic polynomial of
    $a(n)$ itself, and the coefficient on $x^{d - d}$ is precisely the product
    $(r_1 r_2 \dots r_d)^m$.
\end{proof}

\paragraph{Example: Perrin numbers} The Perrin numbers $P(n)$ are a third-order
C-finite sequence defined by
\begin{align*}
    P(0) = 0 \quad &P(1) = 0 \quad P(2) = 2 \\
    P(n + 3) &= P(n + 1) + P(n).
\end{align*}
They are sometimes called the ``skipponaci'' numbers. They satisfy the
interesting property that $p$ divides $P(p)$ for every prime $p$. Tracing
through the above proof reveals the meta-recurrence
\begin{equation}
    \label{perrin-rec}
    P(mn) = P(m) P(m(n - 1)) + c(m) P(m(n - 2)) + P(m(n - 3)),
\end{equation}
where $c(m)$ is A078712 in the OEIS.

\paragraph{Example: General second-order} Let $a(n)$ be annihilated by $(x -
r_1) (x - r_2)$ for distinct reals $r_1$ and $r_2$. The proof of
Proposition~\ref{uniform-rec} shows that $n \mapsto a(mn)$ is annihilated by
\begin{equation*}
    (x - r_1^m)(x - r_2^m) = x^2 - (r_1^m + r_2^m) x + (r_1 r_2)^m.
\end{equation*}
In particular, if $r_1$ and $r_2$ are the golden ratio and its conjugate,
respectively, then $r_1^m + r_2^m = L_m$ is the $m$th Lucas number, and $r_1
r_2 = -1$. This recovers \eqref{fib-rec}.

\paragraph{Example: Square Fibonacci} The square Fibonacci numbers $F_n^2$ are
also C-finite. Going through the steps of the above proof and consulting the
OEIS reveals the following general identity:
\begin{equation}
    \label{square-rec}
    F_{mn}^2 = (5 F_m^2 + 3 (-1)^m) (F_{m(n - 1)}^2 - (-1)^m F_{m(n - 2)}^2) + (-1)^m F_{m(n - 3)}^2.
\end{equation}

\paragraph{Example: Tribonacci} Consider the sequence $T_n$ defined by
\begin{align*}
    T_0 &= 0 \quad T_1 = 0 \quad T_2 = 1 \\
    T_n &= T_{n - 1} + T_{n - 2} + T_{n - 3}.
\end{align*}
The family of sequences $n \mapsto T_{nm}$ satisfy the following recurrences:
\begin{align*}
    T_{n} &= T_{n - 1} + T_{n - 2} + T_{n - 3} \\
    T_{2n} &= 3 T_{2(n - 1)} + T_{2(n - 2)} + T_{2(n - 3)} \\
    T_{3n} &= 7 T_{3(n - 1)} - 5 T_{3(n - 2)} + T_{3(n - 3)} \\
    T_{4n} &= 11 T_{4(n - 1)} + 5 T_{4(n - 2)} + T_{4(n - 3)} \\
    T_{5n} &= 21 T_{5(n - 1)} + T_{5(n - 2)} + T_{5(n - 3)} \\
    T_{6n} &= 39 T_{6(n - 1)} - 11 T_{6(n - 2)} + T_{6(n - 3)}.
\end{align*}
In general,
\begin{equation*}
    T_{nm} = c_1(m) T_{(n - 1)m} + c_2(m) T_{(n - 1)m} + T_{(n - 2)m},
\end{equation*}
where
\begin{align*}
    c_1(1) &= 1 \quad c_1(2) = 3 \quad c_1(3) = 7 \\
    c_1(m) &= c_1(m - 1) + c_1(m - 2) + c_1(m - 3)
\end{align*}
and
\begin{align*}
    c_2(1) &= 1 \quad c_1(2) = 1 \quad c_1(3) = -5 \\
    c_2(m) &= -c_2(m - 1) - c_2(m - 2) + c_2(m - 3).
\end{align*}

The sequences $c_k(m)$ were found via guessing. However,
Proposition~\ref{uniform-rec} establishes that these sequences \emph{are}
C-finite, and so proving our guess requires that we check only finitely many
terms. In this case we must check no more than double the maximum degree, which
is 6 terms. We have produced just enough examples above to constitute a proof.

\section{Uniform sums}%
\label{sec:uniform_sums}

The Fibonacci numbers satisfy the famous summation identity
\begin{equation}
    \label{fib-sum}
    \sum_{k = 0}^n F_k = F_{n + 2} - 1.
\end{equation}
There are as many ways to prove this identity as there are articles devoted to
evaluating related Fibonacci sums \cite{layman, melham, frontczak}, but the
most useful method at this juncture is the following method outlined in
\cite{concrete}. The annihilating polynomial of $F_n$ can be written as
\begin{equation*}
    x^2 - x - 1 = (x - 1) x - 1.
\end{equation*}
Applying this to $F_n$ shows that $F_n = (x - 1) F_{n + 1} = F_{n + 2} - F_{n +
1}$. If we sum over $n$, then the right-hand side telescopes and we recover
\eqref{fib-sum}. In general, if $p(x)$ annihilates $a(n)$ and $p(1) \neq 0$,
then we can write $p(x) = (x - 1) q(x) + p(1)$ for some easily-computable
polynomial $q(x)$. Applying this to $a(n)$ shows that $a(n) = (x - 1) b(n)$
where $b(n) = -q(x) a(n) / p(1)$. Summing over $n$ yields
\begin{equation*}
    \sum_{0 \leq k < n} a(k) = b(n) - b(0).
\end{equation*}
From this idea, the uniform recurrences we have derived for sequences of the
form $n \mapsto a(mn)$ and $n \mapsto a(ni) a(nj)$ will help us discover
uniform summation identities.

Here is one such identity for the Perrin numbers, using \eqref{perrin-rec}.

\begin{proposition}
    The Perrin numbers $P(n)$ satisfy
    \begin{equation*}
        \sum_{0 \leq k < n} P(mn)
        =
        \frac{(P(n) - 3) (1 - P(m) - c(m)) + P(n + 1) (1 - P(m)) + P(n + 2) - 2}{P(m) + c(m)},
    \end{equation*}
    where $c(m)$ is A078712 in the OEIS.
\end{proposition}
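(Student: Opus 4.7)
The approach is to specialize the telescoping method sketched in the paragraph preceding the proposition to the meta-recurrence \eqref{perrin-rec}. That recurrence exhibits $a(n) := P(mn)$ as a sequence annihilated by
\[ p(x) = x^3 - P(m)\, x^2 - c(m)\, x - 1, \]
and evaluating at $x = 1$ immediately gives $p(1) = -(P(m) + c(m))$, which is exactly the denominator appearing in the stated identity --- an encouraging sign that the template will go through cleanly.

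Next, I would carry out the division $p(x) = (x - 1) q(x) + p(1)$; a two-line synthetic division gives $q(x) = x^2 + (1 - P(m))\, x + (1 - P(m) - c(m))$. Plugging into $p(N) a = 0$ and rearranging yields $(N - 1) q(N) a(n) = (P(m) + c(m))\, a(n)$, so $a(n) = b(n+1) - b(n)$ with
\[ b(n) = \frac{P(m(n+2)) + (1 - P(m))\, P(m(n+1)) + (1 - P(m) - c(m))\, P(mn)}{P(m) + c(m)}. \]

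Telescoping from $k = 0$ to $n - 1$ then gives $\sum_{0 \leq k < n} P(mk) = b(n) - b(0)$, and the three $P$-terms in the numerator of $b(n)$ supply the three $P$-terms in the statement. What remains is to evaluate $b(0)$ using the Perrin initial conditions $P(0) = 3$, $P(1) = 0$, $P(2) = 2$, and to simplify $-b(0)(P(m) + c(m)) = -P(2m) - P(m)(1 - P(m)) - 3(1 - P(m) - c(m))$ into the constants $-3$ and $-2$ attached to the $P$-terms in the statement. I expect this final collapse of $b(0)$ to be the main obstacle --- not because it is hard, but because it is easy to commit a sign or index error while collecting the $P(2m)$, $P(m)^2$, and constant contributions; everything else is a mechanical transcription of the shifted-annihilator template from the preceding paragraph.
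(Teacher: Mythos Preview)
Your approach is exactly the one the paper intends: the proposition is stated immediately after the telescoping template and the phrase ``using \eqref{perrin-rec}'' is the entire proof sketch the paper offers, so specializing $p(x)=x^3-P(m)x^2-c(m)x-1$, computing $p(1)$ and $q(x)$ by synthetic division, and telescoping $b(n)-b(0)$ is precisely what is expected. Your identification of the $b(0)$ simplification as the only nontrivial step is also accurate; the key identity you will need there is $P(2m)=P(m)^2+2c(m)$, which follows from Newton's identity applied to the roots of $x^3-x-1$ (and note that the paper's $P(0)=0$ is a typo for the standard $P(0)=3$, consistent with the ``$-3$'' in the stated formula).
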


Using \eqref{square-rec}, we can quickly rediscover the following infinite
family of sums for the square of the Fibonacci numbers.

\begin{proposition}
    \label{fib-square-sum}
    If $m$ is odd, then
    \begin{equation*}
        \label{fib-square-sum-eq}
        \sum_{0 \leq k < n} F_{mk}^2 = \frac{F_{mn} F_{m(n - 1)}}{L_m}.
    \end{equation*}
\end{proposition}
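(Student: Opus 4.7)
The plan is to apply the telescoping technique laid out at the start of this section to the recurrence \eqref{square-rec}, specialized to odd $m$. In that case $(-1)^m = -1$, so writing $A = 5 F_m^2 - 3$, the sequence $n \mapsto F_{mn}^2$ is annihilated by
$$p(x) = x^3 - A x^2 - A x + 1.$$

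First I would evaluate $p(1) = 2 - 2A = 8 - 10 F_m^2$, and then invoke the classical identity $L_m^2 - 5 F_m^2 = 4(-1)^m$, which for odd $m$ gives $5 F_m^2 = L_m^2 + 4$, hence $p(1) = -2 L_m^2$. This is nonzero, which is exactly what makes the telescoping machinery applicable and simultaneously explains the factor of $L_m$ in the denominator of the target formula.

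Next I would perform the polynomial division $p(x) = (x-1) q(x) + p(1)$, yielding $q(x) = x^2 + (1 - A) x + (1 - 2A)$, and define
$$b(n) = \frac{1}{2 L_m^2} \bigl( F_{m(n+2)}^2 + (1-A) F_{m(n+1)}^2 + (1-2A) F_{mn}^2 \bigr),$$
so that $F_{mn}^2 = b(n+1) - b(n)$ by construction. Summing telescopes to
$$\sum_{0 \le k < n} F_{mk}^2 = b(n) - b(0).$$

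The main obstacle is simplifying this three-term combination of Fibonacci squares to the compact product $F_{mn} F_{m(n-1)} / L_m$. Rather than attacking the closed form for $b(n)$ directly, I would pin down both sides by their first difference: the candidate right-hand side $R(n) = F_{mn} F_{m(n-1)}/L_m$ satisfies $R(0) = 0$ (since $F_0 = 0$) and
$$R(n+1) - R(n) = \frac{F_{mn}\bigl(F_{m(n+1)} - F_{m(n-1)}\bigr)}{L_m},$$
so the claim reduces to the identity $F_{m(n+1)} - F_{m(n-1)} = L_m F_{mn}$. But this is exactly the uniform recurrence \eqref{fib-full-rec} rearranged, using $(-1)^{m+1} = 1$ for odd $m$. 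Since both $\sum_{0 \le k < n} F_{mk}^2$ and $R(n)$ vanish at $n = 0$ and have the same forward difference $F_{mn}^2$, they agree for every $n$, completing the proof.
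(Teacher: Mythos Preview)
Your proof is correct, but it takes a different route from the paper's, and in fact the first half of what you wrote is unnecessary for the second half.

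The paper carries out the telescoping construction all the way: it computes $b(n)-b(0)$ explicitly as a rational combination of $F_{mn}^2$, $F_{m(n+1)}^2$, $F_{m(n+2)}^2$ over $10F_m^2-8$, and then grinds this down to $F_{mn}F_{m(n-1)}/L_m$ using several auxiliary identities (notably $(5F_m^2-4)F_m^2=F_{2m}^2$ and $L_m^2=5F_m^2-4$ for odd $m$) together with two applications of the uniform recurrence \eqref{fib-full-rec}. You set up the same $p(x)$, $q(x)$, and $b(n)$, but then abandon them: your actual argument is simply that $R(n)=F_{mn}F_{m(n-1)}/L_m$ satisfies $R(0)=0$ and $R(n+1)-R(n)=F_{mn}^2$, the latter being an immediate consequence of $F_{m(n+1)}-F_{m(n-1)}=L_mF_{mn}$ from \eqref{fib-full-rec}. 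Since the partial sum $\sum_{0\le k<n}F_{mk}^2$ trivially has the same initial value and the same forward difference, the two agree. This is cleaner and uses only one identity instead of several; the entire computation of $p(1)$, $q(x)$, and $b(n)$ plays no role in it and could be deleted. The trade-off is that your argument verifies a formula already in hand, whereas the paper's approach is meant to illustrate how the telescoping machinery \emph{produces} such a formula automatically before any simplification.
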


\begin{proof}
    Using \eqref{square-rec}, we obtain
    \begin{equation*}
        \sum_{0 \leq k < n} F_{mk}^2
        =
        \frac{F_{mn}^2 (7 - 10 F_m^2) + (F_{m(n + 1)}^2 - F_m^2) (4 - 5 F_m^2) + F_{m(n + 2)}^2 - F_{2m}^2}{10 F_m^2 - 8}.
    \end{equation*}
    This is far from the most economical representation. First, the numerator
    here contains $(5 F_m^2 - 4) F_m^2 - F_{2m}^2$. It is easy to check that
    \begin{equation}
        \label{vanish}
        (5 F_m^2 - 4) F_m^2 - F_{2m}^2 = -8 F_m^2 \frac{(-1)^m + 1}{2},
    \end{equation}
    so the expression on the left vanishes when $m$ is odd. We are down to
    \begin{equation*}
        \frac{F_{mn}^2 (7 - 10 F_m^2) + F_{m(n + 1)}^2 (4 - 5 F_m^2) + F_{m(n + 2)}^2}{10 F_m^2 - 8}.
    \end{equation*}
    Applying the general recurrence \eqref{fib-rec} to $F_{m(n + 2)}$ and
    simplifying the result brings us to
    \begin{equation*}
        \frac{F_{mn}^2 (8 - 10 F_m^2) + F_{m(n + 1)}^2 ((4 - 5 F_m^2) + L_m^2) + 2 F_{mn} L_m F_{m(n + 1)}}{10 F_m^2 - 8}.
    \end{equation*}
    When $m$ is odd, the identity $4 - 5 F_m^2 + L_m^2 = 0$ follows from
    dividing \eqref{vanish} by $F_m^2$ and recalling that $L_m = F_{2m} / F_m$.
    Using this and simplifying gives
    \begin{equation*}
        \frac{F_{mn} (-L_m F_{mn} + F_{m(n + 1)})}{L_m},
    \end{equation*}
    and applying the general recurrence \eqref{fib-rec} once more to $F_{m(n +
    1)}$ gives us the final answer $F_{mn} F_{m(n - 1)} / L_m$.
\end{proof}

\section{Uniform products}%
\label{sec:uniform_products}

The proof of Proposition~\ref{uniform-rec} relied on little more than the
identity $r^{mn} = (r^m)^n$ and some structural facts about C-finite sequences.
Unsurprisingly, these ideas apply to other settings. The below proposition
shows how to apply the idea to prove that sequences of the form $n \mapsto
a(ni) a(nj)$ also satisfy meta C-finite recurrences.

\begin{proposition}
    \label{product-rec}
    If $a(n)$ is C-finite of degree $d$ whose characteristic polynomial has $m$
    distinct roots, then $P_{i, j}(n) = a(ni) a(nj)$ satisfies a recurrence of
    the form
    \begin{equation*}
        P_{i, j}(n) = \sum_{k = 1}^{m(2d - m)} c_k(i, j) P_{i, j}(n - k),
    \end{equation*}
    where each $c_k(i, j)$ is C-finite with respect to $i$ and $j$ and $c_k(i,
    j) = c_k(j, i)$. The sequence $c_k(i, j)$ has order (with respect to $i$ or
    $j$) no more than ${d \choose k}$.
\end{proposition}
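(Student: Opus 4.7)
The plan is to run the Binet-formula argument of Proposition~\ref{uniform-rec} on a product of two thinnings. First, I would fix the closed-form expansion $a(n) = \sum_{s=1}^m f_s(n)\, r_s^n$, with distinct roots $r_1,\dots,r_m$ of multiplicities $d_1,\dots,d_m$ summing to $d$, and compute
\[
    P_{i,j}(n) = a(ni)\,a(nj) = \sum_{s,t=1}^m f_s(ni)\, f_t(nj)\, (r_s^i r_t^j)^n.
\]
Since $f_s(ni)\,f_t(nj)$ is a polynomial in $n$ of degree at most $(d_s-1)+(d_t-1)$, the sequence $P_{i,j}(n)$ is a linear combination of terms $n^e (r_s^i r_t^j)^n$ with $e \leq d_s+d_t-2$, and so is annihilated by
\[
    \prod_{1 \leq s, t \leq m}(x - r_s^i r_t^j)^{d_s + d_t - 1}.
\]
Its degree is $\sum_{s,t}(d_s + d_t - 1) = 2md - m^2 = m(2d - m)$, which matches the claimed recurrence order.

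Next, I would show that the coefficients $c_k(i,j)$ are C-finite in each variable separately. Up to a sign, $c_k(i,j)$ equals the elementary symmetric polynomial $e_k$ evaluated on the multiset $\{r_s^i r_t^j\}_{(s,t)}$ with multiplicities $d_s + d_t - 1$. Expanding, each monomial has the form $\prod_{(s,t)\in S}(r_s^i r_t^j) = A^i B^j$ for constants $A, B$ depending on the subset $S$. For fixed $j$, this is a constant multiple of $A^i$, and summing over $S$ yields a finite sum of geometric sequences in $i$, which is C-finite by closure of the class under sums and products; the same argument works in $j$. The symmetry $c_k(i,j) = c_k(j,i)$ follows because relabelling $(s,t) \mapsto (t,s)$ exchanges $r_s^i r_t^j$ with $r_t^i r_s^j$ and leaves the multiplicity $d_s + d_t - 1$ symmetric in $(s,t)$, so the multiset on which $e_k$ is evaluated is invariant under the joint swap $i \leftrightarrow j$.

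The hard part will be matching the stated order bound $\binom{d}{k}$ on $c_k(i,j)$ in a single variable. The naive count of distinct bases $A = \prod_{(s,t) \in S} r_s$ indexed by $k$-subsets of the multiset is $\binom{m(2d - m)}{k}$, which is far larger. To sharpen this, I would re-index the sum by the profile $(\nu_1, \dots, \nu_m)$ recording how often each first coordinate $s$ appears in $S$, since $A = \prod_s r_s^{\nu_s}$ depends only on this profile; any further collapse of distinct profiles to the same $A$ cuts the order further. Squeezing this down to $\binom{d}{k}$ is the delicate combinatorial step.
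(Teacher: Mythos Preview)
Your argument is essentially the same as the paper's for the construction of the annihilator, the degree count, the C-finiteness of the coefficients, and the symmetry $c_k(i,j)=c_k(j,i)$. The only cosmetic difference is notation: you let $d_s$ denote the multiplicity of $r_s$ (so $\sum_s d_s = d$ and the exponents in the annihilator are $d_s+d_t-1$), whereas the paper writes the multiplicity as $d_s+1$ (so $\sum_s d_s = d-m$ and the exponents are $d_s+d_t+1$). Both give the same polynomial of degree $m(2d-m)$.

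The one place you diverge is the order bound $\binom{d}{k}$, which you honestly flag as unfinished. The paper dispatches it in a single sentence: it asserts that, as a sequence in $i$, the factors $r_t^j$ are irrelevant constants, so $c_k(i,j)$ is annihilated by the same polynomial that annihilates the sum of all products of $k$ distinct elements from $\{r_\ell^i\}_{1\le \ell\le d}$ (roots listed with multiplicity), and there are at most $\binom{d}{k}$ such products. Your caution here is justified, because this reasoning does not match the annihilator actually constructed: the multiset of first coordinates in $\{r_s^i r_t^j\}$ (with the multiplicities $d_s+d_t-1$) has size $m(2d-m)$, not $d$, so the profiles $(\nu_1,\dots,\nu_m)$ you identify are not constrained to satisfy $\nu_s\le d_s$. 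Concretely, take $d=m=2$ with distinct roots $r_1,r_2$ and $k=2$: expanding $e_2$ on $\{r_1^{i+j},\,r_1^ir_2^j,\,r_2^ir_1^j,\,r_2^{i+j}\}$ gives
\[
c_2(i,j)\;=\;-(r_1r_2)^j\bigl(r_1^{2i}+r_2^{2i}\bigr)-(r_1r_2)^i\bigl(r_1^{2j}+r_2^{2j}+2(r_1r_2)^{j}\bigr),
\]
which for generic $r_1,r_2$ involves the three distinct $i$-bases $r_1^2$, $r_2^2$, $r_1r_2$ and hence has order $3$ in $i$, not $\binom{2}{2}=1$. So neither your sketch nor the paper's one-line argument actually establishes the stated $\binom{d}{k}$ bound, and the bound itself appears to be incorrect as written.
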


\begin{proof}
    Write the characteristic polynomial of $a(n)$ as $\prod_{k = 1}^m (x -
    r_k)^{d_k + 1}$ where the $r_k$ are distinct and $d_1 + d_2 + \cdots + d_m =
    d - m$. Then,
    \begin{equation*}
        a(n) = \sum_{k = 1}^m p_k(n) r_k^n,
    \end{equation*}
    where $p_k$ is a polynomial in $n$ of degree $d_k$ or less. Therefore
    \begin{equation*}
        P_{i, j}(n) = \sum_{1 \leq k, v \leq m} p_k(in) p_v(jn) (r_k^i r_v^j)^n.
    \end{equation*}
    Immediately, we see that $P_{i, j}(n)$ is annihilated by
    \begin{equation}
        \label{product-killer}
        \prod_{1 \leq k, v \leq m} (x - r_k^i r_v^j)^{d_k + d_v + 1},
    \end{equation}
    a polynomial of degree $\sum_{k, v} (d_k + d_v + 1) = m(2d - m)$. The
    coefficients of this polynomial are elementary symmetric polynomials in the
    variables $\{r_k^i r_v^j\}_{1 \leq k, v \leq d}$, and therefore C-finite
    with respect to $i$ and $j$ by the C-finite closure properties. The roots
    $r_k^i r_v^j$ are symmetric in $i$ and $j$, so the coefficient sequences
    are as well.

    The coefficient on $x^{D - k}$ is essentially the sum of all products of
    $k$ distinct elements from $\{r_k^i r_v^j\}_{1 \leq k, v \leq d}$. As a
    sequence in $i$ the $r_v^j$ factors are irrelevant: The coefficient will be
    annihilated by the characteristic polynomial for the sum of all products of
    $k$ distinct elements from $\{r_k^i\}_{1 \leq k \leq d}$. Each term of this
    latter sum is of the form $\alpha^i$ for some constant $\alpha$, and there
    are no more than ${d \choose k}$ distinct values of $\alpha$. Therefore
    $c_k(i, j)$ has order no more than ${d \choose k}$ with respect to $i$ (and
    also $j$).
\end{proof}

The previous proof can be slightly modified to produce a stronger statement.
Namely, if we split the product \eqref{product-killer} into diagonal and
off-diagonal terms, we get the following corollary.

\begin{corollary}
    \label{factor-corollary}
    Let $a(n)$ be a C-finite sequence of degree $d$ whose characteristic
    polynomial has $m$ distinct roots. Then $n \mapsto a(ni)a(nj)$ is
    annihilated by a polynomial $C_{i, j}(x)$ which factors as
    \begin{equation}
        C_{i, j}(x) = L_{i + j}(x) R_{i, j}(x),
    \end{equation}
    where $\deg L_{i + j} = 2d - m$ and $\deg R_{i, j} = (m - 1)(2d - m)$. The
    coefficients of $L_{i + j}(x)$ are C-finite sequences in $i + j$ and the
    coefficients of $R_{i, j}(x)$ are C-finite sequences which are symmetric in
    $i$ and $j$.
\end{corollary}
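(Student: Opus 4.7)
My plan is to start from the explicit annihilator already produced in the proof of Proposition~\ref{product-rec},
\[
C_{i,j}(x) = \prod_{1 \leq k, v \leq m} (x - r_k^i r_v^j)^{d_k + d_v + 1},
\]
and partition the index set $\{(k,v) : 1 \leq k, v \leq m\}$ into its diagonal part ($k = v$) and its off-diagonal part ($k \neq v$). I would define $L_{i+j}(x)$ to be the diagonal sub-product, $R_{i,j}(x)$ to be the off-diagonal one, and then verify the claimed degree, dependence, and symmetry properties in turn.

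For the diagonal factor, the construction gives
\[
L_{i+j}(x) = \prod_{k=1}^m (x - r_k^{i+j})^{2 d_k + 1},
\]
which manifestly depends on $i$ and $j$ only through $i + j$. Its degree is $\sum_{k=1}^m (2 d_k + 1) = 2(d - m) + m = 2d - m$, matching the claim. Just as in the proof of Proposition~\ref{product-rec}, the coefficients are (signed) elementary symmetric polynomials in $\{r_k^{i+j}\}$ with the stated multiplicities, each $r_k^{i+j}$ is C-finite in the variable $i + j$, and so the coefficients are C-finite in $i + j$ by closure of the C-finite class under sums and products.

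For the off-diagonal factor, the construction gives
\[
R_{i,j}(x) = \prod_{k \neq v} (x - r_k^i r_v^j)^{d_k + d_v + 1},
\]
whose degree is $m(2d - m) - (2d - m) = (m - 1)(2d - m)$, again as claimed. Symmetry under $(i, j) \mapsto (j, i)$ follows from the observation that swapping $i$ and $j$ turns the factor indexed by $(k, v)$ into $(x - r_k^j r_v^i)^{d_k + d_v + 1}$, which is precisely the factor indexed by $(v, k)$; since the involution $(k, v) \mapsto (v, k)$ preserves the off-diagonal set, the full product is invariant, so its coefficients are symmetric in $i$ and $j$. C-finiteness of those coefficients with respect to $i$ and $j$ is inherited from the same closure-properties argument applied to the elementary symmetric polynomials in $\{r_k^i r_v^j\}_{k \neq v}$.

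The proof is essentially a bookkeeping refinement of the one for Proposition~\ref{product-rec}, so I do not expect conceptual difficulty; the only points that require care are verifying that the degree accounting splits cleanly between the diagonal and off-diagonal pieces and that the $i \leftrightarrow j$ symmetry is justified precisely by the reindexing $(k, v) \mapsto (v, k)$. No new facts about C-finite sequences beyond the closure properties already used are required.
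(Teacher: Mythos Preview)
Your proposal is correct and follows exactly the approach the paper indicates: it takes the annihilator \eqref{product-killer} from the proof of Proposition~\ref{product-rec} and splits it into its diagonal ($k=v$) and off-diagonal ($k\neq v$) factors, then verifies the degree, dependence on $i+j$, symmetry, and C-finiteness claims. The paper itself only sketches this in a single sentence, so your write-up is in fact more detailed than the original.
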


There is one case of this corollary worth highlighting. Now that we know these
annihilating polynomials with C-finite coefficients exist, we could find them
by computing enough examples and guessing a pattern. However, if the degrees of
$L_{i + j}(x)$ and $R_{i, j}(x)$ are the same, then it is not always clear
which factor is $L$ and which factor is $R$ in a given example. This happens
when $2d - m = (m - 1)(2d - m)$. Since $m \leq d$, the interesting solution is
$m = 2$. Thus sequences with exactly two roots in their characteristic
polynomial should be handled ``manually.'' We will show one example.

\paragraph{Example: Second-order annihilators} Let $a(n)$ be a C-finite
sequence annihilated by the quadratic $(x - r_1)(x - r_2)$ where $r_1 \neq
r_2$. Then $n \mapsto a(ni) a(nj)$ is annihilated by
\begin{equation*}
    (x^2 - \mathcal{L}(i + j) x + (r_1 r_2)^{i + j})(x^2 - (r_1 r_2)^j \mathcal{L}(i - j) x + (r_1 r_2)^{i + j})
\end{equation*}
where $\mathcal{L}(n) = r_1^n + r_2^n$. If $a(n) = F(n)$ equals the $n$th
Fibonacci number, then $\mathcal{L}(n) = L(n)$ is the $n$th Lucas number, $r_1
r_2 = -1$, and we obtain the annihilator
\begin{equation*}
    (x^2 - L(i + j) x + (-1)^{i + j})(x^2 - (-1)^j L(i - j) x + (-1)^{i + j}).
\end{equation*}

\section{Computer demo}

This article is joined by a corresponding Maple package \texttt{MetaCfinite}, obtainable from GitHub at \url{https://github.com/rwbogl/MetaCfinite}.
With \texttt{MetaCfinite}, nearly all the propositions described in this
article can be explored and checked empirically.

\paragraph{Guessing uniform recurrences} Suppose that we want to discover
\eqref{fib-rec} and the corresponding general pattern. The following Maple
commands compute the five recurrences from \eqref{fib-rec}:
\begin{verbatim}
    Fib := [[0, 1], [1, 1]:
    mSect(Fib, 1, 0); # [[0, 1], [1, 1]]
    mSect(Fib, 2, 0); # [[0, 1], [3, -1]]
    mSect(Fib, 3, 0); # [[0, 2], [4, 1]]
    mSect(Fib, 4, 0); # [[0, 3], [7, -1]]
    mSect(Fib, 5, 0); # [[0, 5], [11, 1]]
\end{verbatim}
We are trying to guess the pattern followed by $1, 3, 4, 7, 11$, and $1, -1, 1,
-1, 1$. The following command does this for us:
\begin{verbatim}
    MetaMSect(Fib, 0); # [[[1, 3], [1, 1]], [[1], [-1]]]
\end{verbatim}
This tells us that, for example, the coefficient on $F_{m(n - 1)}$ is a
sequence $L_m$ which begins $L_1 = 1$, $L_2 = 3$, and satisfies $L_m = L_{m -
1} + L_{m - 2}$. These are the Lucas numbers.

\paragraph{Uniform summation identities} The procedure \texttt{polysum(a, n, p,
x} computes an expression for $\sum_{0 \leq k < n} a(k)$ where $a(n)$ is a
C-finite sequence with characteristic polynomial $p(x)$. For example, the
following command derives the famous identity \eqref{fib-sum}:
\begin{verbatim}
    polysum(F, n, x^2 - x - 1, x); # F(n + 1) - F(1).
\end{verbatim}
This is most powerful when joined with uniform recurrences found by
\texttt{MetaMSect}. For instance, the sequence $n \mapsto F(mn)$ has
characteristic polynomial $p_m(x) = x^2 - L(m) x - (-1)^{m + 1}$. The following
commands derive a summation identity for $\sum_{0 \leq k < n} F(mk)$:
\begin{verbatim}
    polysum(Fm, n, x^2 - L(m) * x - (-1)^(m + 1), x);
         (Fm(n) - Fm(0)) (1 - L(m)) + Fm(n + 1) - Fm(1)
       - ----------------------------------------------
                                       m
                        1 - L(m) + (-1)
\end{verbatim}
That is, we have automatically derived the famous identity
\begin{equation*}
    \sum_{0 \leq k < n} F(mk) = \frac{F(mn) (1 - L(m)) + F(m(n + 1)) - F(m)}{L(m) - 1 - (-1)^m}.
\end{equation*}

\section{Conclusion}
We have used the theory of C-finite sequences to establish \emph{meta-facts}
about the recurrences C-finite sequences satisfy. Namely, we have shown that
the recurrences satisfied by $n \mapsto a(nm)$ and $n \mapsto a(ni) a(nj)$ are
uniform in a C-finite sense. This allowed us to state uniform families of
summation identities for some C-finite sequences.

The summation identities our methods derive are automatic and uniform, but we
do not claim that they are the ``best possible.'' For instance, the first
expression obtained for $\sum_{k = 0}^{n - 1} F_{mk}^2$ in
Proposition~\ref{fib-square-sum} is quite cumbersome compared to the final
answer:
\begin{equation*}
    \frac{F_{mn}^2 (7 - 10 F_m^2) + (F_{m(n + 1)}^2 - F_m^2) (4 - 5 F_m^2) + F_{m(n + 2)}^2 - F_{2m}^2}{10 F_m^2 - 8}
    =
    \frac{F_{mn} F_{m(n - 1)}}{L_m}.
\end{equation*}
It still takes some (semi-automatic) sweat to discover this reduction. Can we
automatically discover and prove such ``complex = simple'' identities? And
might this apply to more complex sums, such as $\sum_{k = 0}^{n - 1} F_{mk}^5$?
The answer is likely yes---and perhaps a C-finite simplification algorithm
already exists---but we leave this as an open problem.

Finally, the author would like to acknowledge Doron Zeilberger for bringing
these problems to his attention and providing encouragement.

\end{document}